\tikzset{join/.code=\tikzset{after node path={%
\ifx\tikzchainprevious\pgfutil@empty\else(\tikzchainprevious)%
edge[every join]#1(\tikzchaincurrent)\fi}}}
\tikzset{>=stealth',every on chain/.append style={join},         every join/.style={->}}
\newtheorem{theorem}{Theorem}[section]
\newtheorem{lemma}[theorem]{Lemma}
\newtheorem{corollary}[theorem]{Corollary}
\theoremstyle{definition}
\title{On the Burau Representation of $B_4$ modulo $p$}
\author{\footnote{Authors are ordered alphabetically } A. Beridze, S. Bigelow, P. Traczyk}
\begin{document}

\maketitle

\begin{abstract} The problem of faithfulness of the (reduced) Burau
representation for $n =4$ is known to be equivalent to the problem
of whether certain two matrices $A$ and $B$ generate a free group
of rank two. It is known that $A^3$ and $B^3$ generate a free
group of rank two \cite{9}, \cite{10}, \cite{4}.  We prove that
they also generate a free group when considered as matrices
over the $\mathbb{Z}_p[t,t^{-1}]$ for any integer $p > 1$.
\end{abstract}

\section{Introduction}

The faithfulness problem of  the Burau representation is open  in
just one case, that of $n=4$ \cite{2}, \cite{7}, \cite{8}. On the
other hand, for $n=4$ the representation is faithful if and only
if certain two matrices $A$ and $B$ generate a free group
$\langle A,B \rangle$ of rank two \cite{1}, \cite{4}.

Let $ \rho_4 :B_4\to GL\left(3,\mathbb{Z}\left[t,t^{-1}\right]\right)$
be  the reduced Burau representation of the braid group $B_4$. 
Consider the matrices $A$ and $B$ as follows:
\begin{equation} \label{1}
A=\rho_4 \left(a^{-1}\right)=\left[ \begin{array}{ccc}
0 & 0 & -t^{-1} \\ 
0 & -t & -t^{-1}+t \\ 
-1 & 0 & -t^{-1}+1 \end{array}
\right],
\end{equation}
\begin{equation} \label{2}
~~~~  B=\rho_4 \left(b\right)=\left[ \begin{array}{ccc}
-t^{-1} & 1 & 0 \\ 
0 & 1 & 0 \\ 
0 & 1 & -t \end{array}
\right],
\end{equation}
where $a={\sigma }_1{\sigma }_2{\sigma }^{-1}_1{\sigma }_3{\sigma }_2^{-1}{\sigma }_1^{-1}$ and $b={\sigma }_3{\sigma }^{-1}_1$ ($\sigma_i, ~i=1,2,3$ are standard generators of $B_4$). It is known that the group $\langle a,b\rangle$ generated by $a$ and $b$ is a free group  which contains the kernel of the Burau map $\rho_4 : B_4 \to G
L\left(3, Z[t,t^{-1}]\right)$ \cite{6}, \cite{4}. 

In the paper it is shown that there exists an order four matrix $T$, which satisfies the following equality:
\begin{equation}
A =T B T^{-1}, ~~~ A^{-1}=T^{-1}BT, ~~~ B^{-1}=T^2 BT^2.
\end{equation}
Consequently, by a suitable substitution in a word 
\begin{equation} \label{5}
\omega=A^{m_k}B^{n_k}\dots A^{n_2}B^{n_2}A^{m_1}B^{n_1},~~
n_i, m_i \in \mathbb{Z}
\end{equation}
we obtain a word in $T$, $T^{-1}$ and $B^n,~ n \in \mathbb{N}$. Therefore, the faithfulness problem of the 
Burau representation for $n=4$ reduces to showing that any word in which $B$ appears only with positive exponents, and $T$ and $T^{-1}$ appear one at a time, does not give the identity matrix. We will give a simple proof of this, which works equally well if the coefficients are in the ring $\mathbb{Z}_p[t,t^{-1}]$ for any integer $p > 1$.

\section{Matrices conjugating $B^{-1}, A$ and $A^{-1}$  to $B$}

\begin{lemma} There exists a matrix $T$ which satisfies the following equality:
	\begin{equation}
	A =T B T^{-1}, ~~~ A^{-1}=T^{-1}BT, ~~~ B^{-1}=T^2 BT^2.
	\end{equation}
	This matrix $T$ is of order four as an element of
	$GL\left(3,\mathbb{Z}\left[t,t^{-1}\right]\right)$.
\end{lemma}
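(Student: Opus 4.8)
The plan is to produce the matrix $T$ explicitly and then verify the three conjugation identities and the order-four property by direct computation. First I would look for $T$ among matrices over $\mathbb{Z}[t,t^{-1}]$ by exploiting the constraints: since $A = TBT^{-1}$ means $A$ and $B$ are conjugate, and the reduced Burau images of $a^{-1}$ and $b$ are conjugate in $GL(3,\mathbb{Z}[t,t^{-1}])$, a natural guess is that $T$ itself is the Burau image of some braid (or a scalar multiple thereof) realizing the geometric symmetry that swaps the relevant curves. Concretely, I would set up $T = [t_{ij}]$ with unknown Laurent-polynomial entries, impose $TB = AT$, and solve the resulting linear system over $\mathbb{Z}[t,t^{-1}]$; the determinant and invertibility conditions then pin down $T$ up to a unit scalar.

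Once a candidate $T$ is in hand, the remaining work is purely verificational. The key steps, in order, are: (i) check $AT = TB$, equivalently $A = TBT^{-1}$, by multiplying the $3\times 3$ matrices and comparing entries; (ii) check the second identity $A^{-1} = T^{-1}BT$, which (given (i)) is equivalent to $T^2$ centralizing... no — more precisely, combining $A = TBT^{-1}$ with $A^{-1} = T^{-1}BT$ gives $T^{-1}BT = (TBT^{-1})^{-1} = TB^{-1}T^{-1}$, i.e. $T^{-2}BT^2 = B^{-1}$, so the second and third identities are essentially the same statement $B^{-1} = T^2 B T^{-2}$, and once $T$ has order four, $T^{-2} = T^2$ and the third identity $B^{-1} = T^2 B T^2$ follows. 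So it suffices to verify $AT = TB$ and $T^4 = I$ (equivalently $T^2 B T^2 = B^{-1}$ can be checked as an independent sanity check). (iii) Compute $T^2$, $T^3$, $T^4$ and confirm $T^4 = I$ while $T^2 \neq I$, establishing that $T$ has order exactly four.

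I would streamline the bookkeeping by noting that $B$ is almost triangular, so conjugating $B$ is not too painful, and by recording $T^{-1}$ explicitly (its entries are again Laurent polynomials since $\det T$ should be a unit, plausibly $\pm t^k$). A useful reduction: if $T$ arises as $\rho_4(\tau)$ for a braid $\tau$ with $\tau b \tau^{-1} = a^{-1}$ in $B_4$ (or with the appropriate relations up to the kernel), then identities (i)–(iii) hold automatically at the braid-group level and one only needs $\tau^4 \in \ker\rho_4$ together with $\rho_4(\tau)^4 = I$ on the nose; but since the statement only asserts existence of $T$ as a matrix, the brute-force linear-algebra route is cleanest and self-contained.

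The main obstacle I anticipate is not conceptual but organizational: finding the right $T$ (and its scalar normalization) so that all three identities hold simultaneously with the \emph{same} $T$, rather than with different conjugators for each relation — the linear system $TB = AT$ alone has a one-parameter (up to units) family of solutions, and I must check that a member of this family also satisfies $T^4 = I$, which is a nonlinear constraint that fixes the normalization. Getting the Laurent-polynomial arithmetic exactly right, especially the $t \mapsto t^{-1}$ terms appearing in $A$, will require care, but no step should be genuinely hard.
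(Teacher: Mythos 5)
Your overall strategy---set up the linear system $TB=AT$ for an unknown $3\times 3$ matrix over $\mathbb{Z}[t,t^{-1}]$ and then impose $T^4=I$ to fix the normalization---is viable and would succeed, but it is not the route the paper takes, and as written your proposal never actually exhibits $T$, which for a constructive existence statement is essentially the entire content. The paper instead observes that $A$, $B$, $A^{-1}$ and $B^{-1}$ all share the eigenvalues $-t^{-1}$, $1$, $-t$, writes down explicit diagonalizing matrices $T_A$, $T_B$ with $A=T_A\Delta T_A^{-1}$ and $B=T_B\Delta T_B^{-1}$ for the common diagonal matrix $\Delta$, and sets $T=T_AT_B^{-1}$; this produces the constant integer matrix
\begin{equation*}
T=\left[\begin{array}{ccc}-1&1&0\\-1&0&1\\-1&0&0\end{array}\right],
\end{equation*}
for which $A=TBT^{-1}$ is automatic and $T^4=I$, $B^{-1}=T^2BT^2$ are short direct checks. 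The diagonalization route buys the first identity for free and makes the finite order of $T$ plausible from the start (it acts by permuting eigendata), whereas your route must solve an underdetermined linear system and then search the solution family for a finite-order member. One concrete logical slip to fix: you assert that it suffices to verify $AT=TB$ and $T^4=I$. That is not enough. Given $A=TBT^{-1}$, the identity $A^{-1}=T^{-1}BT$ is equivalent to $B^{-1}=T^{-2}BT^2$, which is an additional constraint on $T$ not implied by $A=TBT^{-1}$ together with $T^4=I$: conjugation by $T$ could send $B$ to $A$ without sending $A$ to $B^{-1}$. So the check $T^2BT^2=B^{-1}$ that you relegate to an ``independent sanity check'' is in fact a necessary third verification, and the paper performs it explicitly.
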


\begin{proof}
	It is easily checked that the matrices $A,B, A^{-1}$ and $B^{-1}$ have the same eigenvalues
	$-t^{-1}, -t$ and $1.$
	Therefore, they  are conjugate to the same diagonal matrix:
	\begin{equation}
	\Delta=\left[ \begin{array}{ccc}
	-t^{-1} & 0 & 0 \\ 
	0 & 1 & 0 \\ 
	0 & 0 & -t \end{array}
	\right].
	\end{equation} 
	Let us consider transformation matrices 
	$T_A$ and $T_B$ 
	\begin{equation}
	T_A=\left[ \begin{array}{ccc}
	-1 & t^{-1} & 0 \\ 
	-1 & t^{-1}-1 & 1 \\ 
	-1 & -1 & 0 \end{array}
	\right], ~~~~  T_B=\left[ \begin{array}{ccc}
	1 & 1 & 0 \\ 
	0 & t^{-1}+1 & 0 \\ 
	0 & t^{-1} & 1 \end{array}
	\right].
	\end{equation}
	so that the following equalities hold.
	\begin{equation}
	A=T_A \Delta T_A^{-1}, ~~~B=T_B \Delta T_B^{-1}. 
	\end{equation}
	We obtain that
	\begin{equation} 
	A=T_A T_B^{-1} B T_B T_A^{-1}.
	\end{equation}
	If we calculate the matrix $T=T_A T_B^{-1}$, then we obtain that:
	\begin{equation}
	T=\left[ \begin{array}{ccc}
	-1 & 1 & 0 \\ 
	-1 & 0 & 1 \\ 
	-1 & 0 & 0 \end{array}
	\right], T^{-1}=\left[ \begin{array}{ccc}
	0 & 0 & -1 \\ 
	1 & 0 & -1 \\ 
	0 & 1 & -1 \end{array}
	\right].
	\end{equation}
	The direct calculation shows that the matrix $T$ is an element of order four in $GL\left(3,\mathbb{Z}\left[t,t^{-1}\right]\right)$ and   $B^{-1}=T^2BT^2$. 
\end{proof}

As we mentioned above the problem of faithfulness of the Burau representation for $n = 4$ is equivalent to the problem of
whether $A$ and $B$ generate a free group of rank $2$ in $GL\left(3,\mathbb{Z}\left[t,t^{-1}\right]\right)$ \cite{1}, \cite{4}.
This means that we need to prove that for every non--empty non--reducible word in letters $A,B,A^{-1}$ and $B^{-1}$ the corresponding product of matrices $A,B,A^{-1}$ and $B^{-1}$ is not equal to the unit matrix. If a word like this does exist we may as well consider one with suffix $B^{-i}, i \geq 1$ replacing the given word with a conjugate if necessary. However, for such words we have the following.
\begin{corollary}
	Let $w$ be a formally irreducible non--empty word in letters $A,B,A^{-1}$ and $B^{-1}$ with suffix  $B^{-i}, i \geq 1$. Then the corresponding product of matrices 
	$A,B,A^{-1}$ and $B^{-1}$ may be written in the form\\
	\begin{equation} \label{7}
	 T^{m} B^{n_k}  \cdots T^{m_2} B^{n_2} T^{m_1} B^{n_1}T^2 , ~ n_i \in \mathbb{N}, ~m_i=\pm 1, ~ m \in \{-1,0,1,2\}.
	\end{equation}
\end{corollary}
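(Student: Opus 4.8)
The plan is to rewrite the word $w$ by substituting the three conjugation relations from the Lemma, turning every occurrence of $A$, $A^{-1}$, and $B^{-1}$ into an expression involving $T$ and $B$ with positive exponents. Concretely, from $A = TBT^{-1}$ we get $A^n = TB^nT^{-1}$; from $A^{-1} = T^{-1}BT$ we get $A^{-n} = T^{-1}B^nT$; and from $B^{-1} = T^2BT^2$ we get $B^{-n} = T^2B^nT^2$ (using $T^4 = I$, so $T^2 = T^{-2}$). Thus every maximal block of a single letter in $w$ becomes a conjugate $T^{\epsilon}B^{k}T^{-\epsilon}$ with $\epsilon \in \{-1,0,2\}$ and $k \in \mathbb{N}$. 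Here the block of $B$'s with positive exponent contributes $\epsilon = 0$.

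The next step is to concatenate these conjugated blocks. When two consecutive blocks meet, the interface contributes a power $T^{\epsilon_j} \cdot T^{-\epsilon_{j-1}} = T^{\epsilon_j - \epsilon_{j-1}}$, which after reduction modulo $4$ lies in $\{-1,0,1,2\}$ since each $\epsilon$ lies in $\{-1,0,2\}$. Because the word $w$ is formally irreducible, no two adjacent blocks are the same letter or inverse letters, so the formal cancellation of $B$-powers never happens at an interface; the $B^{n_i}$ that appear are genuine blocks and the intermediate $T$-powers $T^{m_i}$ are well-defined with $m_i = \pm 1$ in general (and one checks $m_i \neq 0$ precisely because consecutive letters differ, forcing a nonzero difference of the corresponding $\epsilon$'s; the $0$ exponent only arises when a $B$-block is adjacent to a $B$-block, which irreducibility forbids). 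Finally the suffix: since $w$ ends in $B^{-i}$ with $i \geq 1$, the rightmost block is $B^{-i} = T^2 B^i T^2$, so the product ends in $T^2$, giving the claimed trailing $T^2$. The leftmost block contributes a prefix $T^m$ with $m$ the $\epsilon$ of the first letter, hence $m \in \{-1,0,2\}$, which is contained in $\{-1,0,1,2\}$.

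I would organize the write-up as: first record the three exponentiated identities as displayed equations; then describe the block decomposition of $w$; then carefully track the $T$-exponents at each junction, verifying the range of $m_i$ and $m$; then read off the suffix $T^2$. A small but worthwhile remark is that the total $T$-exponent must be consistent, and that all arithmetic on $T$-exponents is modulo $4$ because $T$ has order four.

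The main obstacle I expect is the bookkeeping in the middle step: one must be careful that the interior $T$-powers $m_i$ are genuinely $\pm 1$ (not $0$ and not $2$), which requires using formal irreducibility of $w$ in exactly the right way — irreducibility rules out a $B$-block adjacent to a $B^{-1}$-block and an $A$-block adjacent to an $A^{-1}$-block, and it is precisely these forbidden adjacencies that would produce an interface exponent of $0$ or $2$ (or would merge two $B$-blocks into one). Handling the boundary blocks separately from the interior blocks, and confirming that merging never collapses the word to the empty word (which would contradict $w$ being non-empty and irreducible), is the part that needs the most care. Everything else is a direct substitution and reduction modulo $T^4 = I$.
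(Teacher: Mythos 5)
Your proposal is correct and follows essentially the same route as the paper: replace $A^{\pm n}$ and $B^{-n}$ by their $T$-conjugates of $B^{n}$, concatenate the blocks, and reduce the resulting $T$-powers modulo $4$, using formal irreducibility to see that the interior exponents are $\pm 1$; your write-up is merely more systematic than the paper's terse version. The only slip is that the conjugating exponent $\epsilon$ actually ranges over $\{-1,0,1,2\}$ (you omit $\epsilon=1$ for the $A$-blocks), which is harmless since the prefix exponent $m$ then lies exactly in $\{-1,0,1,2\}$ as claimed.
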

\begin{proof} We replace $A, A^{-1}$ and $B^{-1}$ with $T^{-1}BT, TBT^{-1}$ and $T^2BT^2$ 
	respectively. In the process we may obtain $TT^2, T^{-1}T^{2}, T^2T$ and $T^2T^{-1}$ between two consecutive powers of $B$ but these are equal to
	$T^{-1}, T, T^{-1}$ and $T$ respectively (because $T^4 = 1$). The need to allow $m = 2$ or $0$ arises from the possibility that the last multiplication might be by $B^{-1}$ or by $B$.
\end{proof}

\section{The group $\langle A^3,B^3 \rangle$ modulo $p$ }

Let $q_p:GL\left(3,\mathbb{Z}\left[t,t^{-1}\right]\right) \to GL\left(3,\mathbb{Z}_p\left[t,t^{-1}\right]\right)$ be the map given by reducing coefficient modulo $p$. Let the map
\begin{equation} \label{6}
\rho^p_4 :B_4\to GL\left(3,\mathbb{Z}_p\left[t,t^{-1}\right]\right)
\end{equation}
be given by $\rho^p_4=q_p \circ \rho_4$. Note that we use same notation of image matrices under the maps $\rho_4$ and $\rho^p_4$. If it is necessary, we just say that a given matrix is over the $\mathbb{Z}_p\left[t,t^{-1}\right]$. Therefore, our aim is to show that any combination of $T$, $T^{-1}$  (appearing one at a time) and positive powers of $B^n$ does not give the identity matrix over the $\mathbb{Z}_p\left[t,t^{-1}\right]$.

Note that  from now on $T$, $T^{-1}$ and  $B$ are considered as matrices over $\mathbb{Z}_p\left[t,t^{-1}\right]$, which means that coefficients of polynomials are considered modulo $p$, where $p$ is an integer greater than $1$.

\begin{theorem} Let $\omega$ be a word of the letters $B$, $T$ and $T^{-1}$ of the following form 
	\begin{equation} \label{7}
	\omega= T^{m} B^{n_k}  \cdots T^{m_2} B^{n_2} T^{m_1} B^{n_1}T^2 , ~ n_i \in \mathbb{N}, ~m_i=\pm 1, ~ m \in \{-1,0,1,2\}.
	\end{equation}
	If for every $i$ we have $n_i \geq 2,$ whenever $m_{i-1}=1$ and $n_i \geq 3,$ whenever $m_{i-1}=-1$ then the product matrix is not identity.
\end{theorem}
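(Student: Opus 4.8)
The plan is to exhibit one vector that $\omega$ fails to fix, which is enough to show $\omega\neq I$. I take $e_3$. Writing $\omega=T^{m}B^{n_k}T^{m_{k-1}}B^{n_{k-1}}\cdots T^{m_1}B^{n_1}T^{2}$, set $v_0:=T^{2}e_3=e_1$ and $v_j:=T^{m_j}B^{n_j}v_{j-1}$ for $1\le j\le k$, where $m_k:=m$, so that $v_k=\omega e_3$; the goal is $v_k\neq e_3$. A direct computation gives
\[
B^{n}=\begin{pmatrix}(-t^{-1})^{n}&c_n&0\\ 0&1&0\\ 0&d_n&(-t)^{n}\end{pmatrix},\qquad
c_n=\sum_{i=0}^{n-1}(-t^{-1})^{i},\qquad d_n=\sum_{i=0}^{n-1}(-t)^{i},
\]
so $c_n$ occupies $t$-degrees $-(n-1),\dots,0$ and $d_n$ occupies $t$-degrees $0,\dots,n-1$, in each case with extreme coefficients $\pm1$, hence units in $\mathbb Z_p$. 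For a nonzero vector $v$ over $\mathbb Z_p[t,t^{-1}]$ let $D^{+}(v)$, $D^{-}(v)$ be the largest and smallest $t$-degree occurring among its entries, and $W(v):=D^{+}(v)-D^{-}(v)\ge0$. Since $T,T^{-1},T^{2}$ have constant entries and lie in $GL_3(\mathbb Z)\subset GL_3(\mathbb Z_p)$, applying the trivial bound $D^{+}(Mv)\le D^{+}(v)$ to $M$ and to $M^{-1}$ (and similarly for $D^{-}$) gives $D^{\pm}(Mv)=D^{\pm}(v)$; hence $D^{\pm}(v_j)=D^{\pm}(B^{n_j}v_{j-1})$ and $W(v_j)=W(B^{n_j}v_{j-1})$, i.e. $t$-degrees move only when a power of $B$ is applied.

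Now $B^{n}(x,y,z)^{\mathsf T}=\big((-t^{-1})^{n}x+c_ny,\ y,\ d_ny+(-t)^{n}z\big)^{\mathsf T}$: the first coordinate is pushed down in degree (by $n$ from $x$, by $n-1$ from $y$) and the third is pushed up (by $n-1$ from $y$, by $n$ from $z$), and since the two competing terms in each case carry unit leading coefficients their sum has a unit extreme coefficient unless these terms collide in the same degree. The substance of the proof is an inductive description of the ``degree profile'' of $v_j$: which coordinate carries $D^{+}(v_j)$, which carries $D^{-}(v_j)$, that the associated extreme coefficients are $\pm1$, and how large the gaps are between the extremal coordinate and the other two. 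One checks that applying $T^{1}$ leaves this profile in one of a small number of standard shapes and $T^{-1}$ in the mirror shapes, and that the two hypotheses --- $n_i\ge2$ when $m_{i-1}=1$ and $n_i\ge3$ when $m_{i-1}=-1$ --- are exactly what keeps the relevant gap positive as the profile evolves. A positive gap forces that, in forming $B^{n_i}v_{i-1}$, the subleading contribution cannot annihilate the leading one; in the borderline case where the gap is $1$, a single $(\pm1)+(\mp1)=0$ cancellation may still occur at the very top, costing one degree, but then the opposite end has a large gap, so $D^{-}$ still descends by the full $n_i$ (and symmetrically with the roles of $D^{+}$ and $D^{-}$ exchanged). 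In every case the net effect is $W(v_j)>W(v_{j-1})$ for $j\ge2$.

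Starting from $W(v_1)=0$ --- explicitly, $v_1=(-t^{-1})^{n_1}\cdot T^{m_1}e_1$ is a monomial of negative degree times a nonzero constant vector --- this yields $W(v_k)\ge k-1>0$ for every $k\ge2$; for $k=1$ the same formula $v_1=(-t^{-1})^{n_1}T^{m}e_1$ already shows $v_1\neq e_3$. In either case $v_k=\omega e_3$ involves a genuine power of $t$, so $v_k\neq e_3$ and $\omega\neq I$ over $\mathbb Z_p[t,t^{-1}]$ for every integer $p>1$. The step I expect to be the real work is the inductive bookkeeping just sketched: one must carry the finitely many shapes of the degree profile through the four combinations of $m_{i-1},m_i\in\{\pm1\}$ (plus the endpoint $m_k\in\{-1,0,1,2\}$ and the special base profile coming from $v_1$), checking in each that the extreme coefficients remain units and that the gap inequalities propagate. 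Everything else is routine manipulation of the explicit matrices $B^{n}$, $T$, $T^{-1}$.
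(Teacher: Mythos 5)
Your setup is sound and matches the paper's in spirit: both arguments apply $\omega$ to $e_3$, observe that $T^{\pm1}$ and $T^2$ preserve all $t$-degree data while powers of $B$ shift it, and aim to show that the hypotheses $n_i\ge 2$ (after $T$) and $n_i\ge 3$ (after $T^{-1}$) prevent the degree spread from collapsing. Your explicit formula for $B^n$ and the observation that $D^{\pm}(Mv)=D^{\pm}(v)$ for constant invertible $M$ are both correct, as is the base case $v_1=(-t^{-1})^{n_1}T^{m_1}e_1$.

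However, there is a genuine gap: the entire content of the theorem is the inductive invariant, and you have not stated it, let alone verified it. You write that ``one checks'' that the degree profile stays within ``a small number of standard shapes'' and that ``the step I expect to be the real work is the inductive bookkeeping just sketched'' --- but those shapes are never defined, the extreme-coefficient and gap conditions are never written down, and none of the cases ($m_{i-1},m_i\in\{\pm1\}$, the endpoint, the base profile) are checked. Without a precise invariant, the key assertion $W(v_j)>W(v_{j-1})$ is unsupported; in particular your own admission that a $(\pm1)+(\mp1)$ cancellation ``may still occur at the very top, costing one degree'' shows that some quantitative control on the opposite end is needed, and that control is exactly the missing invariant. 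For comparison, the paper resolves this by tracking only the \emph{lowest} degrees of the three coordinates relative to one another: it defines $X_1$ (first coordinate's lowest degree at least $2$ below the other two), $X_2$ (all lowest degrees equal), $X_3$ (second coordinate's lowest degree $2$ below the other two), and verifies five explicit containments, $T^2e_3\in X_1$, $TX_1\subset X_2$, $T^{-1}X_1\subset X_3$, $B^nX_2\subset X_1$ for $n\ge2$, and $B^nX_3\subset X_1$ for $n\ge3$. That one-sided bookkeeping is strictly easier than controlling the width $W=D^+-D^-$, since cancellation at the top end never has to be analyzed. To complete your proof you would either need to carry out the two-sided case analysis in full (harder than necessary) or replace the width invariant with a one-sided invariant of the paper's type.
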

\begin{proof} To prove the statement, we use the method of  the ping--pong lemma. Let $X_1$ be the set of such vectors that lowest degree of the first coordinate is smaller by $2$ or more than lowest degree of second and third coordinates. Similarly, $X_2$ is the set of such vectors that lowest degree of all coordinates are equal and $X_3$ is the set of such vectors that lowest degree of second coordinate is two less than lowest degree of first and third coordinates. Note that if a coordinate is zero (trivial polynomial) that it is considered that the lowest degree is $+\infty$. Let $B$, $T$ and $T^{-1}$ acts on $X_1$, $X_2$ and $X_3$ by left multiplication. Our aim is to prove that:
\begin{enumerate}
\item $T^2 v_0 \in X_1$, for some $v_0  \notin X_1 \bigcup X_2 \bigcup X_3$;
\item $T X_1 \subset X_2$;
\item $T^{-1} X_1 \subset X_3$;
\item $B^{n} X_2 \subset X_1, ~\forall~ n\ge 2$;
\item $B^{n} X_3 \subset X_1, ~\forall~ n\ge 3$.
\end{enumerate}
In this case, for $\omega$ as in the previous theorem,
$\omega v_0 \ne v_0$ and so it is not the identity matrix.
We now prove each of these five items.

{\bf 1.}
Let $v_0= \left[ \begin{smallmatrix} 0 \\  0 \\ 1   \end{smallmatrix}\right]$, then  $T^2 v_0 =\left[ \begin{smallmatrix} 1 \\  0 \\ 0   \end{smallmatrix}\right]\in X_1$.

{\bf 2.}
Let $v \in X_1$, then it has the following form:
\begin{equation} \label{8}
v=\left[ \begin{array}{ccc}
a_0t^{n}+a_1t^{n+1}+a_2t^{n+2}+a_3t^{n+3}+\dots  \\ 
~~~~~~~~~~~~~~~~~~~~~b_2t^{n+2}+b_3t^{n+3}+\dots  \\ 
~~~~~~~~~~~~~~~~~~~~~c_2t^{n+2}+c_3t^{n+3}+\dots  \end{array}
\right]
\end{equation}
where $a_0\neq 0 \in \mathbb{Z}_p$. Then we have:
 \[
Tv=T\left[ \begin{array}{ccc}
a_0t^{n}+a_1t^{n+1}+a_2t^{n+2}+a_3t^{n+3}+\dots  \\ 
~~~~~~~~~~~~~~~~~~~~~b_2t^{n+2}+b_3t^{n+3}+\dots  \\ 
~~~~~~~~~~~~~~~~~~~~~c_2t^{n+2}+c_3t^{n+3}+\dots  \end{array}
\right]=
\]
\begin{equation} \label{9}
=\left[ \begin{array}{ccc}
(-a_0)t^{n}+(-a_1)t^{n+1}+(-a_2+b_2)t^{n+2}+(-a_3+b_3)t^{n+3}+\dots\\ 
(-a_0)t^{n}+(-a_1)t^{n+1}+(-a_2+c_2)t^{n+2}+(-a_3+c_3)t^{n+3}+\dots\\ 
(-a_0)t^{n}+(-a_1)t^{n+1}+(-a_2)t^{n+2}+(-a_3)t^{n+3}+\dots~~~~~~~~~~~~ 
\end{array}
\right].
\end{equation}
Therefore, $Tv \in X_2$;

{\bf 3.}
Let apply on  the vector $v$  given by \eqref{8} with $T^{-1}$: 
 \[
~~T^{-1}v=T^{-1}\left[ \begin{array}{ccc}
a_0t^{n}+a_1t^{n+1}+a_2t^{n+2}+a_3t^{n+3}+\dots  \\ 
~~~~~~~~~~~~~~~~~~~~~b_2t^{n+2}+b_3t^{n+3}+\dots  \\ 
~~~~~~~~~~~~~~~~~~~~~c_2t^{n+2}+c_3t^{n+3}+\dots  \end{array}
\right]=
\]
\begin{equation}  \label{10}
=\left[ \begin{array}{ccc}
~~~~~~~~~~~~~~~~~~~~(-c_2)t^{n+2}+(-c_3)t^{n+3}+\dots   \\ 
a_0t^{n}+a_1t^{n+1}+(a_2-c_2)t^{n+2}+(a_3-c_3)t^{n+3}+\dots\\ 
~~~~~~~~~~~~~~~~~~~~~~(b_2-c_2)t^{n+2}+(b_3-c_3)t^{n+3}+\dots  \end{array}
\right].
\end{equation}
Hence,  $T^{-1} v \in X_2$ for all $v \in X_1$; 

{\bf 4.} Let $v \in X_2$, then we have: 
\begin{equation} \label{11}
v=\left[ \begin{array}{ccc}
a_0t^{n}+a_1t^{n+1}+a_2t^{n+2}+\dots  \\ 
b_0t^{n}+b_1t^{n+1}+b_2t^{n+2}+\dots  \\ 
c_0t^{n}+c_1t^{n+1}+c_2t^{n+2}+\dots  \end{array}
\right],
\end{equation}
where $a_0,b_0,c_0\neq 0 \in \mathbb{Z}_p$. Therefore, 
\[
B^n v=B^n \left[ \begin{array}{ccc}
a_0t^{n}+a_1t^{n+1}+a_2t^{n+2}+\dots  \\ 
b_0t^{n}+b_1t^{n+1}+b_2t^{n+2}+\dots  \\ 
c_0t^{n}+c_1t^{n+1}+c_2t^{n+2}+\dots  \end{array}
\right]=
\]
\begin{equation} \label{12}
=B^{n-1}\left[ \begin{array}{ccc}
(-a_0)t^{n-1}+(-a_1+b_0)t^{n}+(-a_2+b_1)t^{n+1}+\dots~~~~~\\ 
~~~~~~~~~~~~b_0t^{n}+b_1t^{n+1}+\dots\\ 
~~~~~~~~~~~~~~~~~~~~b_0t^{n}+(b_1-c_0)t^{n+1}+\dots
\end{array}
\right].
\end{equation}
As we see, we have at least one more $B$ to multiply the obtained matrix by and therefore, the 
result belongs to $X_1$;

{\bf 5.} Let $v \in X_3$, then
\begin{equation} \label{13}
v=\left[ \begin{array}{ccc}
~~~~~~~~~~~~~~~~~~~~a_2t^{n+2}+a_3t^{n+3}+\dots   \\ 
b_0t^{n}+b_1t^{n+1}+b_2t^{n+2}+b_3t^{n+3}+\dots\\ 
~~~~~~~~~~~~~~~~~~~~~~c_2t^{n+2}+c_3t^{n+3}+\dots  \end{array}
\right].
\end{equation}
Consequently, we have
\[
B^nv=B^n \left[ \begin{array}{ccc}
~~~~~~~~~~~~~~~~~~~~a_2t^{n+2}+a_3t^{n+3}+\dots   \\ 
b_0t^{n}+b_1t^{n+1}+b_2t^{n+2}+b_3t^{n+3}+\dots\\ 
~~~~~~~~~~~~~~~~~~~~~~c_2t^{n+2}+c_3t^{n+3}+\dots  \end{array}
\right]=
\]
\begin{equation} \label{14}
=B^{n-1}\left[ \begin{array}{ccc}
~b_0t^{n}+(-a_2+b_1)t^{n+1}+(-a_3+b_2)t^{n+2}+\dots\\
b_0t^{n}+b_1t^{n+1}+b_2t^{n+2}+b_3t^{n+3}+\dots~~~~~~~~~\\
b_0t^{n}+b_1t^{n+1}+b_2t^{n+2}+(b_3-c_3)t^{n+3}+\dots  \end{array}
\right].
\end{equation}
Therefore, $Bv$ belongs to $X_2$ and by the previous case, we obtain that $B^nv \in X_1$ for all $n \ge 3$. 
\end{proof}

\begin{corollary} The matrices $A^3$ and $B^3$ generate a non--abelian free group of rank $2$ over $\mathbb{Z}_p[t,t^{-1}]$ for any integer $p > 1$.
\end{corollary}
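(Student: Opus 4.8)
The plan is to deduce the statement from the Theorem by rewriting an arbitrary reduced word in $A^{\pm3}$ and $B^{\pm3}$ as a word of the form treated there. First I would record a relation that follows from the Lemma although it is not stated there: from $A=TBT^{-1}$, $B^{-1}=T^2BT^2$ and $T^4=1$ one gets $A^{-1}=TB^{-1}T^{-1}=T^3BT=T^{-1}BT$ and also $A^{-1}=T^2AT^2$. Cubing the three conjugation relations and cancelling with $TT^{-1}=T^{-1}T=1$ and $T^2T^2=1$ then gives, for all $a,b\ge1$,
\[
A^{3a}=TB^{3a}T^{-1},\qquad A^{-3a}=T^{-1}B^{3a}T,\qquad B^{-3b}=T^2B^{3b}T^2,
\]
while a positive power $B^{3b}$ is left as it is. In particular $T^2$ conjugates $A^{3}$ to $A^{-3}$ and $B^{3}$ to $B^{-3}$, so conjugation by $T^2$ carries any word in $A^{\pm3},B^{\pm3}$ to the word obtained by reversing the sign of every exponent, and the result again lies in $\langle A^3,B^3\rangle$.

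It suffices to show that every non-empty reduced word $w$ in the letters $A^{\pm3},B^{\pm3}$ represents a matrix different from the identity; this says precisely that $x\mapsto A^3$, $y\mapsto B^3$ defines an embedding of the free group of rank two, which is exactly the assertion to be proved. Since a matrix is the identity exactly when all of its conjugates are, I may assume $w$ is cyclically reduced, so that if $w$ has more than one syllable then, reading cyclically, consecutive syllables are powers of different generators. If $w$ is a single syllable $A^{3a}$ or $B^{3b}$ with non-zero exponent then $w\ne I$, because over $\mathbb{Z}_p[t,t^{-1}]$ the $(1,1)$-entry of $B^k$ is $(-1)^kt^{-k}$, which is never $1$ for $k\ne0$, and $A^{3a}=TB^{3a}T^{-1}$. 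Otherwise $w$ has at least two syllables and hence at least one syllable that is a power of $B^3$; after possibly replacing $w$ by $T^2wT^2$ (still non-empty and cyclically reduced, and the identity if and only if $w$ is) I may assume one such syllable is a negative power of $B^3$, and then, after one more cyclic permutation (again a conjugation), that the last syllable of $w$ is $B^{-3b}$ with $b\ge1$, preceded by a syllable which is a power of $A^3$.

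Then I would substitute the three displayed identities into $w$ and simplify the powers of $T$ that appear at the junctions. The terminal syllable contributes $\cdots T^2B^{3b}T^2$, which supplies the trailing $T^2$ of the normal form in the Theorem. At every other junction exactly one of the two adjacent syllables is a power of $A^3$ (hence flanked by $T^{\pm1}$) and the other a power of $B^3$ (flanked by $T^2$ or by nothing), so the abutting $T$-powers collapse to a single $T^{\pm1}$ in each case, by $T^{-1}T^2=T$, $T^2T=T^{-1}$, $TT^2=T^{-1}$, $T^2T^{-1}=T$ (or simply $T^{\pm1}$ when a bare $B^{3b}$ meets a power of $A^3$). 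Thus $w$ acquires exactly the shape $T^{m}B^{n_k}\cdots T^{m_1}B^{n_1}T^2$ with every $m_i=\pm1$ and $m\in\{-1,0,1,2\}$. The key point is that each block $B^{n_i}$ comes from a single syllable of $w$, so every $n_i$ is a non-zero multiple of $3$ and in particular $n_i\ge3$; the hypotheses of the Theorem ($n_i\ge2$ whenever $m_{i-1}=1$ and $n_i\ge3$ whenever $m_{i-1}=-1$) therefore hold automatically, and the Theorem gives $w\ne I$.

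The steps above are all short, and I expect no real obstacle in the junction computation (that the $T$-powers between consecutive $B$-blocks always reduce to a single exponent $\pm1$) — it is a routine check over the few possible ordered pairs of syllable types. The place requiring the most care is arranging, by conjugation, that the last syllable of $w$ is a negative power of $B^3$ rather than of $A^3$ — which is why the possibility of flipping all exponents by conjugating with $T^2$ is needed — together with an honest treatment of the cyclic reduction, including the merging of syllables when the cyclically reduced word is cut open, so that one genuinely lands in the exact form required by the Theorem.
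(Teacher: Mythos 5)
Your proposal is correct and follows essentially the same route as the paper: rewrite the word via the conjugation relations $A=TBT^{-1}$, $A^{-1}=T^{-1}BT$, $B^{-1}=T^2BT^2$ into the normal form of the Theorem, observe that every $B$-block comes from a single syllable of the word in $A^{\pm3},B^{\pm3}$ and hence has exponent at least $3$, and apply the Theorem. You additionally supply details the paper leaves implicit --- the $T^2$-conjugation to guarantee a negative $B$-syllable, the cyclic reduction, and the single-syllable case --- all of which check out.
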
 

\subsection*{Acknowledgements}
The research was partially supported by the institutional scientific research project of Batumi Shota Rustaveli State University and the internship program of the ministry of education, culture, and sport of the autonomous republic of Adjara. The paper is written during A. Beridze's visiting at University of California, Santa Barbara (UCSB) from 15.09.2019 to 15.12.2019.


\begin{thebibliography}{10}
\bibitem[Bir]{1} \textbf{J. S. Birman.} Braids, links, and mapping class groups. \textit{Annals of Mathematics Stidies, No. 82, Princeton University Press, Princeton, NJ (1974)}

\bibitem[Big]{2}  \textbf{S. Bigelow.} The Burau representation is not faithful for $n=5$. \textit{Geom. Topol. 3 (1999), 397-404}

\bibitem[Ber-Tra$_1$]{3} \textbf{A. Beridze, P. Traczyk.} Burau representation for $n=4$. \textit{J. Knot Theory Ramifications 27 (2018), no. 3, 1840002, 6 pp.}

\bibitem[Ber-Tra$_2$]{4} \textbf{A. Beridze, P. Traczyk.}  Forks, noodles and the Burau representation for $n=4$. \textit{Trans. A. Razmadze Math. Inst. 172 (2018), no. 3, part A, 337--353}

\bibitem[Bok-Ves]{6}  \textbf{L. Bokut, A. Vesnin.} Gröbner-Shirshov bases for some braid groups. \textit{J. Symbolic Comput. 41 (2006), no. 3-4, 357--371}


\bibitem[Lon-Pat]{7} \textbf{D. D.Long, M. Paton.} The Burau representation is not faithful for $n\ge 6$. \textit{Topology 32, (1993), no. 2, 439---447}

\bibitem[Mood]{8} \textbf{J. A. Moody.} The Burau representation of the braid group $B_n$ is unfaithful for large $ n\ $. \textit{Bull. Amer. Math. Soc. (N.S.) 25, (1991) no. 2, 379--384}

\bibitem[Mor]{9} \textbf{S. Moran.} Matrix representation for the braid group B4. \textit{ Arch. Math. (Basel) 34 (1980), no. 6, 496–501}

\bibitem[Wit-Zar]{10} \textbf{S. Witzel, M. C. B. Zaremsky.} A free subgroup in the image of the 4-strand Burau representation,\textit{ preprint (2013), arXiv: 1304.7923v1}
\end{thebibliography}
\end{document}